\newtheorem{theorem}{Theorem}[section]
\newtheorem{definition}[theorem]{Definition}
\newtheorem{lemma}[theorem]{Lemma}
\newtheorem{Corollary}[theorem]{Corollary}
\newenvironment{proof}{{\bf Proof.  }}{$\square$}
\begin{document}
\title{Composition-Diamond Lemma for Non-associative Algebras over a Commutative Algebra\footnote{Supported by the
NNSF of China (No.10771077, 10911120389) and the NSF of Guangdong
Province (No. 06025062).}}

\author{
 Yuqun
Chen, Jing Li and Mingjun Zeng \  \\
{\small \ School of Mathematical Sciences, South China Normal
University}\\
{\small Guangzhou 510631, P. R. China}\\
{\small  yqchen@scnu.edu.cn}\\
{\small yulin\_jj@yahoo.com.cn}
\\
{\small dearmj@126.com}}

\date{}
\maketitle

\maketitle \noindent\textbf{Abstract:} We establish the
Composition-Diamond lemma for non-associative algebras over a free
commutative algebra. As an application, we prove that  every
countably generated non-associative algebra over an arbitrary
commutative algebra $K$ can be embedded into a two-generated
non-associative algebra over $K$.

\noindent \textbf{Key words: }Gr\"{o}bner-Shirshov basis;
non-associative algebra; commutative algebra.

\noindent {\bf AMS} Mathematics Subject Classification(2000): 16S15,
13P10, 17Dxx, 13Axx

\section{Introduction}
 Gr\"{o}bner bases and Gr\"{o}bner-Shirshov bases theories were
invented independently by A.I. Shirshov \cite{Shir3} for non-associative algebras
and commutative (anti-commutative) non-associative algebras
\cite{Sh62a}, for Lie algebras (explicitly) and associative algebras
(implicitly) \cite{Sh62b}, for infinite series algebras (both formal
and convergent) by H. Hironaka \cite{Hi64} and for polynomial
algebras by B. Buchberger (first publication in \cite{Bu70}).
Gr\"{o}bner bases and Gr\"{o}bner-Shirshov bases theories have been
proved to be very useful in different branches of mathematics,
including commutative algebra and combinatorial algebra, see, for
example, the books
 \cite{AL, BKu94, BuCL, BuW, CLO, Ei}, the papers \cite{Be78, Bo72,Bo76,BoC1,CM},
 and the surveys \cite{BC, BFKK00, BK03, BK05}.

It is well known that every countably generated non-associative
algebra over a field $k$ can be embedded into a two-generated
non-associative algebra over $k$. This result follows from
Gr\"{o}bner-Shirshov bases theory for non-associative algebras by
A.I. Shirshov \cite{Sh62a}.

Composition-Diamond lemmas for associative algebras over a
polynomial algebra is established by A.A. Mikhalev and A.A. Zolotykh
\cite{MZ}, for associative algebras over an associative algebra by
L.A. Bokut, Yuqun Chen and Yongshan Chen \cite{BCC08}, for Lie
algebras over a polynomial algebra by L.A. Bokut, Yuqun Chen and
Yongshan Chen \cite{BCC09}. In this paper, we establish the
Composition-Diamond lemma for non-associative algebras over a
polynomial algebra. As an application, we prove that every countably
generated non-associative algebra over an arbitrary commutative
algebra $K$ can be embedded into a two-generated non-associative
algebra over $K$, in particular, this result holds if $K$ is a free
commutative algebra.

\section{Composition-Diamond lemma for non-associative algebras over a commutative algebra}
Let $k$ be a field, $K$ a commutative associative $k-$algebra with
unit, $X$ a set and ${K}(X)$ the free non-associative algebra over
$K$ generated by $X$.

Let $[Y]$ denote the free abelian monoid generated by $Y$, $X^{*}$
the free monoid generated by $X$ and $X^{**}$ the set of all
non-associative words in $X$. Denote by
$$
N=[Y]X^{**}=\{u=u^{Y}u^{X}|u^{Y}\in [Y],u^{X}\in X^{**}\}.
$$

Let $kN$ be a $k$- linear space spanned by $N$. For any $u=u^Yu^X,\
      v=v^Yv^X\in N$, we define the multiplication of the words as
follows
$$
uv=u^Yv^Yu^Xv^X\in N.
$$
It is clear that $kN$ is  the free non-associative $k[Y]$-algebra
generated by $X$. Such an algebra is denoted by ${k[Y]}(X)$, i.e.,
$kN={k[Y]}(X)$. Clearly,
$$
{k[Y]}(X)={k[Y]}\otimes k(X).
$$

Now, we order the set $N=[Y]X^{**}$.

Let $>$ be a total ordering on $X^{**}$. Then $>$ is called monomial
if
$$
(\forall u,v,w\in X^{**}) \ \ u>v\Rightarrow wu>wv \ \mbox{ and }\ \
uw>vw.
$$
For example, the deg-lex ordering  on $X^{**}$ is  monomial: $uv>u_{1}v_1$, if $deg(uv)>deg(u_{1}v_1)$, otherwise $u>u_1$ or $u=u_1, v>v_1$.
Similarly, we define the monomial ordering on $[Y]$.

Suppose that both $>_{X}$ and $>_{Y}$ are monomial orderings on
$X^{**}$ and $[Y]$, respectively. For any $u=u^Yu^X,v=v^Yv^X\in N$,
define
$$
u>v\Leftrightarrow %|u|>|v|  \ or \ (|u|=|v| \ and \ (
u^X>_{X}v^X \  or \ (u^X=v^X \ and \ u^Y>_{Y}v^Y).
$$
%where $|u|=|u^X|+|u^Y|$ is the length of $u$.
It is obvious that $>$ is a monomial ordering on $N$ in the sense
of
$$
(\forall u,v,w\in [Y]X^{**}) \ \ u>v\Rightarrow wu>wv, \ uw>vw\
\mbox{ and }\ \ w^Yu>w^Yv.
$$
We will use this ordering in this paper.

For any polynomial $f\in k[Y](X)$, $f$ has a unique presentation of
the form
$$
f=\alpha_{\bar{f}}\bar{f}+\sum\alpha_iu_i,
$$
where $\bar{f},u_i\in
[Y]X^{**},\bar{f}>u_i,\alpha_{\bar{f}},\alpha_i\in k.$ $\bar{f}$ is
called the leading term of $f$. $f$ is monic if the coefficient of
$\bar{f}$ is 1.

\ \

Let  $\star \not\in X$. By a $\star$-word we mean any expression in
$[Y](X \cup \{\star\})^{**}$ with only one occurrence of $\star$.
Let $u$ be a $\star$-word and $s\in k[Y](X)$. Then we call $u|_s =
u|_{\star\mapsto s}$ an $s$-word.

It is clear that for $s$-word $u|_s$, we can express $u|_s=u^Y(asb)$
for some $a,b\in X^{*}$.

Since $>$ is monomial on $[Y]X^{**}$, we have following lemma.

\begin{lemma}
 Let $s\in k[Y](X)$ be a non-zero polynomial.
Then for any $s$-word  $u|_s=u^Y(asb)$,
$\overline{u^Y(asb)}=u^Y(a\bar{s}b)$.
\end{lemma}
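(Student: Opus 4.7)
The plan is to decompose $s$ along its leading-term expansion and propagate the resulting monomial inequalities through the $\star$-substitution, using only the three monomial-ordering properties of $>$ on $N$ that were verified immediately before the lemma.

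First, I would write $s = \alpha_{\bar s}\bar s + \sum_i \alpha_i u_i$, with $\alpha_{\bar s}, \alpha_i \in k$, $\alpha_{\bar s}\ne 0$, $u_i \in [Y]X^{**}$, and $\bar s > u_i$ for every $i$. Since substitution into a $\star$-word is $k$-linear in the substituted polynomial, this yields
\[
u^Y(asb) \;=\; \alpha_{\bar s}\,u^Y(a\bar s\,b) \;+\; \sum_i \alpha_i\, u^Y(au_i b),
\]
so that identifying $\overline{u^Y(asb)}$ reduces to showing $u^Y(a\bar s\,b) > u^Y(au_i b)$ in $N$ for every $i$.

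To prove this comparison, I would argue by induction on the structure of the underlying $\star$-word. Every $\star$-word in $[Y](X\cup\{\star\})^{**}$ is obtained from the symbol $\star$ by a finite sequence of three kinds of elementary operations: left multiplication by an element of $X^{**}$, right multiplication by an element of $X^{**}$, and multiplication by a monomial from $[Y]$. The monomial property of $>$ on $N$, namely that $v>w$ implies $cv>cw$, $vc>wc$, and $c^Y v > c^Y w$ for all $c,v,w\in N$, guarantees that each of these three elementary operations preserves strict inequality. Applying in order the sequence of operations that builds $u^Y(a\cdot b)$ from $\star$, starting from the base inequality $\bar s > u_i$, delivers $u^Y(a\bar s\,b) > u^Y(au_i b)$, and the lemma follows.

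The whole argument is essentially routine; the only mild point worth being careful about is making explicit the correspondence between the $\star$-word notation $u|_s = u^Y(asb)$ and the sequence of elementary monomial-preserving operations applied to $\star$, but this is immediate from the inductive definition of non-associative words in $[Y](X\cup\{\star\})^{**}$. I do not expect any real obstacle.
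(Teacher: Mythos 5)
Your argument is correct and is exactly the route the paper intends: the paper states Lemma 2.1 with no written proof beyond the remark that it follows ``since $>$ is monomial on $[Y]X^{**}$,'' and your proposal simply makes that explicit by expanding $s$ about its leading term and propagating $\bar{s}>u_i$ through the $\star$-word via the three monomial-ordering properties. No gap; you have just written out the details the authors left implicit.
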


Now, we give the definition of compositions.

\begin{definition}
Let $f$ and $g$
be monic polynomials of $k[Y](X)$,  $w=w^Yw^X\in [Y]X^{**}$ and
$a,b,c\in X^*$, where $w^Y=L(\bar{f}^Y, \bar{g}^Y)\triangleq L$ and
$L(\bar{f}^Y, \bar{g}^Y)$ is the least common multiple of
$\bar{f}^Y$ and $\bar{g}^Y$ in  $k[Y]$. Then we have the following
compositions.

$1.$ $X$-inclusion

If $w^X=\bar{f}^X=(a(\bar{g}^X)b)$, then
$$
(f,g)_{w}=\frac{L}{ \bar{f}^Y} f-\frac{L}{ \bar{g}^Y}(a(g)b)
$$
is called the composition of $X$-inclusion.

$2.$  $Y$-intersection only

If $|\bar{f}^Y|+|\bar{g}^Y|> |w^Y|$ and
$w^X=(a(\bar{f}^X)b(\bar{g}^X)c)$, then
$$
(f,g)_{w}=\frac{L}{ \bar{f}^Y} (a(f)b(\bar{g}^X)c)-\frac{L}{ \bar{g}^Y} (a(\bar{f}^X)b(g)c)
$$
is called the composition of $Y$-intersection only, where for $u\in
[Y],\ |u|$ means the degree of $u$.

 $w$ is called the
 ambiguity of the composition $(f,g)_w$.
\end{definition}

\noindent{\bf Remark 1}.In the case of $Y$-intersection only in
Definition 2.2, $\bar{f}^X$ and $\bar{g}^X$ are disjoint.

\noindent{\bf Remark 2}. By Lemma 2.1, we have
$w>\overline{(f,g)_w}.$

\noindent{\bf Remark 3}. In Definition 2.2, the compositions of
$f,g$ are the same as the ones in $k(X)$, if $Y=\emptyset$. If this
is the case, we have only composition of $X$-inclusion.

\begin{definition}
 Let $S$ be a monic subset of $k[Y](X)$ and $f,g\in S$. A composition
$(f,g)_w$ is said to be \emph{trivial modulo} $(S,w)$, denoted by
$(f,g)_w\equiv 0 \ \ mod(S,w)$, if
$$
(f,g)_w=\sum_i\alpha_iu_i|_{s_i},
$$
where each $s_i\in S,
  \ \alpha_i\in k, \ u_i|_{{s_i}} \ s_i$-word and $w>u_i|_{\bar{s_i}}$.

Generally, for any $p,q\in k[Y](X),\ p\equiv q \ \ mod (S,w)$ if and
only if $p-q\equiv 0 \ \ mod(S,w).$

 $S$ is called a \emph{Gr\"{o}bner-Shirshov basis} in
$k[Y](X)$ if all compositions of elements in $S$ are trivial modulo
$S$.
\end{definition}

 If a subset $S$ of  $k[Y](X)$ is not a Gr\"{o}bner-Shirshov basis
then one can add to $S$ all nontrivial compositions of polynomials
of $S$ and continue this process repeatedly so that we obtain a
Gr\"{o}bner-Shirshov basis $S^{c}$ that contains $S$. Such process
is called the Shirshov algorithm.

\begin{lemma}
Let $S$ be a
Gr\"{o}bner-Shirshov basis in $k[Y](X)$ and $s_1,s_2\in S$. Let
$u_1|_{s_1},\ u_2|_{s_2}$ be $s_1,s_2$-words respectively. If
$w=u_1|_{\overline{s_1}}=u_2|_{\overline{s_2}}$, then
$u_1|_{s_1}\equiv u_2|_{s_2} \ mod(S,w)$.
\end{lemma}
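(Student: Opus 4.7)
The plan is to analyze how the two occurrences of $\overline{s_1}$ and $\overline{s_2}$ sit inside the common leading monomial $w$, using the factorization $w = w^Yw^X$ with $w^Y\in[Y]$ and $w^X\in X^{**}$. On the $Y$-side both $\overline{s_1}^Y$ and $\overline{s_2}^Y$ divide $w^Y$, so their lcm $L$ also divides $w^Y$; write $w^Y = Lp$ with $p\in[Y]$. On the $X$-side, two non-associative subwords of $w^X$ are either disjoint or one is contained in the other. This yields three cases; the first two are covered by an existing composition, while the third requires a separate argument.

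In the first case ($X$-inclusion), I may assume without loss of generality that $\overline{s_2}^X$ lies inside $\overline{s_1}^X$, say $\overline{s_1}^X = (a'\overline{s_2}^X b')$, so that the template $(a_1\star b_1)$ surrounding $\overline{s_1}^X$ in $w^X$ satisfies $a_2 = a_1a'$ and $b_2 = b'b_1$. With $w'' = L\overline{s_1}^X$ the ambiguity of the $X$-inclusion composition, a direct computation yields
\[
u_1|_{s_1} - u_2|_{s_2} = p\cdot(a_1\,(s_1,s_2)_{w''}\,b_1).
\]
Since $S$ is a Gr\"{o}bner-Shirshov basis, $(s_1,s_2)_{w''}$ is a sum of $s$-words with leading terms strictly less than $w''$; multiplying by $p$ and inserting each such $s$-word into the template $(a_1\star b_1)$ produces $s$-words whose leading terms are strictly less than $w$, by Lemma 2.1 and the monomiality of $>$ on $[Y]$ and $X^{**}$.

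In the second case ($Y$-intersection only), the parts $\overline{s_1}^X$ and $\overline{s_2}^X$ are disjoint inside $w^X$ and $|\overline{s_1}^Y|+|\overline{s_2}^Y|>|L|$. Fixing a two-hole non-associative template $d(\star_1,\star_2)$ with $d(\overline{s_1}^X,\overline{s_2}^X)=w^X$ and setting $w''=L\cdot w^X$, one verifies
\[
u_1|_{s_1} - u_2|_{s_2} = p\cdot(s_1,s_2)_{w''},
\]
where the right-hand side is exactly $p$ times the $Y$-intersection composition. Triviality of $(s_1,s_2)_{w''}$ modulo $(S,w'')$ then propagates, upon multiplication by $p$, to the required congruence modulo $(S,w)$ by the same monomiality argument.

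The remaining case, in which the $X$-parts are disjoint and $\gcd(\overline{s_1}^Y,\overline{s_2}^Y)=1$ so that no composition is defined at all, is the main obstacle. Here I would apply the parallelogram trick: write $s_i = \overline{s_i} + r_i$ with $\overline{r_i}<\overline{s_i}$, set $q = w^Y/(\overline{s_1}^Y\overline{s_2}^Y)\in[Y]$, and verify, by adding and subtracting the double substitution $q\cdot d(s_1,s_2)$, the identity
\[
u_1|_{s_1} - u_2|_{s_2} = q\cdot d(r_1,s_2) - q\cdot d(s_1,r_2).
\]
Each summand is a sum of $s_1$-words or $s_2$-words. To confirm that every such $s$-word has leading term strictly less than $w$, I would split on whether $\overline{r_i}$ differs from $\overline{s_i}$ in the $X$-part or only in the $Y$-part and invoke monomiality on $[Y]$ and $X^{**}$ separately. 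The careful bookkeeping of $Y$-coefficients through these subcases is the most delicate part of the argument.
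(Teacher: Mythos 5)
Your proposal is correct and follows essentially the same route as the paper: the same three-case split ($X$-inclusion, $Y$-intersection only with disjoint $X$-parts, and fully disjoint), the same reduction of the first two cases to the corresponding compositions of Definition 2.2 via the identities $u_1|_{s_1}-u_2|_{s_2}=p\cdot(a_1(s_1,s_2)_{w''}b_1)$ and $p\cdot(s_1,s_2)_{w''}$, and the same add-and-subtract ("parallelogram") identity $q\cdot d(s_1-\overline{s_1},s_2)-q\cdot d(s_1,s_2-\overline{s_2})$ in the disjoint case, with triviality modulo $(S,w)$ obtained from monomiality of the ordering exactly as in the paper's proof.
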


\textbf{Proof: }
 Clearly, $w^Y=L(\bar{s_1}^Y,\bar{s_2}^Y)\cdot
t=L\cdot t$ for some $t\in [Y]$.

There are three cases to consider.

\emph{Case 1}. $X$-inclusion.

We may assume that $\bar{s_1}^X=(c(\bar{s_2}^X)d)$ for some $c,d\in
X^*$ and $w^X=(a(\bar{s_1}^X)b)=(a(c(\bar{s_2}^X)d)b)$ for some
$a,b\in X^*$. Thus,
\begin{eqnarray*}
u_1|_{s_1}-u_2|_{s_2}&=&\frac{L\cdot t}{\bar{s_1}^Y}(a(s_1)b)-\frac{L\cdot t}{\bar{s_2}^Y}(a(c(s_2)d)b)\\
&=&t\cdot(a(\frac{L}{\bar{s_1}^Y}s_1-\frac{L}{\bar{s_2}^Y}(c(s_2)d))b)\\
&=&t\cdot(a(s_1,s_2)_{w_1}b)\\
&\equiv&0 \ \ \ \ \ \ \ mod(S,w)
\end{eqnarray*}
where $w_1=L\overline{s_1}^X$.

\emph{Case 2}. $Y$-intersection only.

In this case, $w^X=(a(\bar{s_1}^X)b(\bar{s_2}^X)c),\ a,b,c\in X^*$
and then
\begin{eqnarray*}
u_1|_{s_1}-u_2|_{s_2}&=&\frac{L\cdot t}{\bar{s_1}^Y}(a(s_1)b(\bar{s_2}^X)c)-
\frac{L\cdot t}{\bar{s_2}^Y}(a(\bar{s_1}^X)b(s_2)c)\\
&=&t\cdot(s_1,s_2)_{w_1}\\
&\equiv&0 \ \ \ \ \ \ \ mod(S,w)
\end{eqnarray*}
where $w_1=Lw^X$.

\emph{Case 3}. $Y$-disjoint and $X$-disjoint.

In this case, $L=\bar{s_1}^Y\bar{s_2}^Y$ and
$w^X=(a(\bar{s_1}^X)b(\bar{s_2}^X)c),\ a,b,c\in X^*$. We have
\begin{eqnarray*}
u_1|_{s_1}-u_2|_{s_2}&=&\frac{L\cdot t}{\bar{s_1}^Y}(a(s_1)b(\bar{s_2}^X)c)-
\frac{L\cdot t}{\bar{s_2}^Y}(a(\bar{s_1}^X)b(s_2)c)\\
&=&t\cdot(\frac{L}{\bar{s_1}^Y}(a(s_1)b(\bar{s_2}^X)c)-\frac{L}{\bar{s_2}^Y}(a(\bar{s_1}^X)b(s_2)c))\\
&=&t\cdot(\bar{s_2}^Y(a(s_1)b(\bar{s_2}^X)c)-\bar{s_1}^Y(a(\bar{s_1}^X)b(s_2)c))\\
&=&t\cdot((a(s_1)b(\bar{s_2})c)-(a(\bar{s_1})b(s_2)c))\\
&=&t\cdot((a(s_1)b(\bar{s_2})c)-(a(s_1)b(s_2)c)+(a(s_1)b(s_2)c)-(a(\bar{s_1})b(s_2)c))\\
&=&t\cdot((a(s_1-\bar{s_1})b(s_2)c)-(a(s_1)b(s_2-\bar{s_2})c))\\
&\equiv&0 \ \ \ \ \ \ \ mod(S,w)
\end{eqnarray*}
since
$w=(a(\bar{s_1})b(\bar{s_2})c)>\overline{(a(s_1-\bar{s_1})b(s_2)c)}$
and
$w=(a(\bar{s_1})b(\bar{s_2})c)>\overline{(a(s_1)b(s_2-\bar{s_2})c)}$.

This completes the proof.\  \hfill $\square$

\begin{lemma}
Let $S\subseteq k[Y](X)$ with each
$s\in S$ monic and $Irr(S)=\{w \in [Y]X^{**}|w\neq u|_{\bar{s}},\
u|_{{s}} \ \mbox{is an}\ s\mbox{-word}, \ s\in S\}$. Then for any
$f\in k[Y](X)$,
$$
f=\sum_{u_i|_{\overline{s_i}}\leq \bar{f}} \alpha_i
u_i|_{s_i}+\sum_{v_j\leq \bar{f}} \beta_j v_j,
$$
where  $\alpha_i,  \beta_j \in k,\ u_i|_{{s_i}} \ s_i$-word, \
$s_i\in S \ \mbox{and} \ v_j\in Irr(S)$.
\end{lemma}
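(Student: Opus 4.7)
The plan is to proceed by induction on the leading monomial $\bar{f}$ with respect to the well-ordering $>$ on $N=[Y]X^{**}$ (we assume, as is standard in Gr\"obner--Shirshov theory, that the monomial orderings fixed on $[Y]$ and $X^{**}$ are well-orderings, so that the induced ordering on $N$ is a well-ordering). If $f=0$ the empty sum works, giving the base case.

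For the inductive step, write $f=\alpha_{\bar f}\bar f+f'$ with $\bar{f'}<\bar f$, and split on whether $\bar f\in Irr(S)$ or not. If $\bar f\in Irr(S)$, apply the induction hypothesis to $f'$ to obtain a representation whose monomial summands are all bounded by $\bar{f'}<\bar f$, and then prepend the single irreducible term $\alpha_{\bar f}\bar f$ (which lies in $Irr(S)$ and satisfies $\bar f\le\bar f$). If $\bar f\notin Irr(S)$, then by the definition of $Irr(S)$ there exist $s\in S$ and an $s$-word $u|_s$ such that $\bar f=u|_{\bar s}$. By Lemma 2.1, $\overline{u|_s}=u|_{\bar s}=\bar f$, and because $s$ is monic and $u|_s=u^Y(asb)$ is obtained from $s$ by multiplication by the monomial $u^Y$ together with the monomial left/right multiplications by $a$ and $b$ (operations that preserve monicity, as seen from the multiplication rule $uv=u^Yv^Yu^Xv^X$ on $N$), the polynomial $u|_s$ is itself monic. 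Hence $g:=f-\alpha_{\bar f}u|_s$ satisfies $\bar g<\bar f$; apply the induction hypothesis to $g$ and adjoin the term $\alpha_{\bar f}u|_s$, which is an $s$-word term with $\overline{u|_s}=\bar f\le\bar f$, yielding a representation of the required form.

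The only genuine point to check is the claim that $u|_s$ is monic, which amounts to observing that substituting a monic $s$ into a $\star$-monomial of $[Y](X\cup\{\star\})^{**}$ produces a polynomial whose leading term (namely $u|_{\bar s}$, by Lemma 2.1) inherits coefficient $1$; this follows directly from the definition of multiplication on $N$, since the $Y$-part of a single element of $N$ contributes coefficient $1$ and left/right multiplication by elements of $X^{*}$ in $k(X)$ does not alter the leading coefficient of a monic polynomial. Once this bookkeeping is in place, the induction closes routinely and delivers the bound $u_i|_{\overline{s_i}}\le\bar f$ and $v_j\le\bar f$ on every summand. The main (mild) obstacle is precisely this verification of monicity under $\star$-substitution, which justifies the cancellation used in Case 2.
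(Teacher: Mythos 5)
Your proof is correct and follows essentially the same route as the paper: induction on the leading term $\bar f$, splitting according to whether $\bar f\in Irr(S)$ (subtract $\alpha_{\bar f}\bar f$) or $\bar f=u|_{\bar s}$ for some $s$-word (subtract $\alpha_{\bar f}u|_s$ and use Lemma 2.1 to see the leading term drops). Your explicit check that $u|_s$ is monic is a point the paper leaves implicit, but it does not change the argument.
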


{\bf Proof.}Let  $f=\sum\limits_{i}\alpha_{i}u_{i}\in k[Y](X)$,
where $0\neq{\alpha_{i}\in{k}}$ and $u_{1}>u_{2}>\cdots$. If
$u_1\in{Irr(S)}$, then let $f_{1}=f-\alpha_{1}u_1$. If
$u_1\not\in{Irr(S)}$, then there exists an $s$-word $u|_s$
 such that $\bar f=u|_{\bar{s}}$. Let
$f_1=f-\alpha_1u|_{{s}}$. In both cases, we have
$\bar{f}>\bar{f_1}$. Then the result follows from the induction on
$\bar{f}$. \  \hfill \ \ $\square$

From the above lemmas, we reach the following theorem:

\begin{theorem}
(Composition-Diamond lemma for  $k[Y](X)$)\label{t1} Let
$S\subseteq k[Y](X)$ with each $s\in S$ monic, $>$ the ordering on
$[Y]X^{**}$ defined as before and $Id(S)$ the ideal of $k[Y](X)$
generated by $S$ as $k[Y]$-algebra. Then the following statements
are equivalent:
\begin{enumerate}
\item[(i)]\ $S$ is a Gr\"obner-Shirshov basis in $k[Y](X)$.
\item[(ii)]\ If \ $0\neq
f\in Id(S)$, then $\overline{f}=u|_{\overline{s}}$ for some $s$-word
$u|_s , \ s\in S$.
\item[(iii)]\
$Irr(S)=\{w \in [Y]X^{**}|w\neq u|_{\bar{s}}, u|_{{s}} \ \mbox{is
an}\ s\mbox{-word}, \ s\in S\}$ is a $k$-linear basis for the factor
algebra $k[Y](X|S)=k[Y](X)/Id(S)$.
\end{enumerate}
\end{theorem}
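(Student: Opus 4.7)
The plan is to prove the equivalence of (i), (ii), (iii) in the standard cyclic pattern (i)$\Rightarrow$(ii)$\Rightarrow$(iii)$\Rightarrow$(i), using Lemmas 2.4 and 2.5 as the workhorses.

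For (i)$\Rightarrow$(ii), I would take $0\neq f\in Id(S)$ and write $f=\sum_{i}\alpha_i u_i|_{s_i}$ with $s_i\in S$, each $u_i|_{s_i}$ an $s_i$-word, and $\alpha_i\in k$; such a representation exists because $Id(S)$ is the ideal generated by $S$ as a $k[Y]$-algebra and every generator has the form $u|_{s}$. Set $w_i=u_i|_{\overline{s_i}}$ and $w=\max_i w_i$; we always have $\bar{f}\leq w$, and the goal is to show equality. I would proceed by induction on $w$ (with ties broken by the number of indices $i$ with $w_i=w$). If $\bar{f}=w$, we are done. Otherwise the leading terms with $w_i=w$ must cancel, so at least two indices $i_1,i_2$ satisfy $u_{i_1}|_{\overline{s_{i_1}}}=u_{i_2}|_{\overline{s_{i_2}}}=w$. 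Lemma 2.4 (using that $S$ is a Gr\"obner--Shirshov basis in the $X$-inclusion and $Y$-intersection cases, and applying directly in the disjoint case) yields $u_{i_1}|_{s_{i_1}}\equiv u_{i_2}|_{s_{i_2}}\ \mathrm{mod}(S,w)$. Substituting this congruence strictly decreases either $w$ or the multiplicity of $w$, so the induction closes.

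For (ii)$\Rightarrow$(iii), Lemma 2.5 already shows $Irr(S)$ spans $k[Y](X)/Id(S)$ over $k$, so only linear independence needs argument. Suppose $\sum_{j}\beta_j v_j\in Id(S)$ with $v_j\in Irr(S)$ and some $\beta_j\neq 0$; then its leading term is one of the $v_j$, which by (ii) would equal $u|_{\bar{s}}$ for some $s$-word, contradicting $v_j\in Irr(S)$.

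For (iii)$\Rightarrow$(i), I would argue by contradiction: if $S$ is not a Gr\"obner--Shirshov basis, pick a composition $(f,g)_w$ that is not trivial modulo $(S,w)$. Apply Lemma 2.5 to the element $(f,g)_w\in Id(S)$ to write $(f,g)_w=\sum_i\alpha_i u_i|_{s_i}+\sum_j\beta_j v_j$ with $u_i|_{\overline{s_i}}\leq\overline{(f,g)_w}<w$ and $v_j\leq\overline{(f,g)_w}<w$ and $v_j\in Irr(S)$. Since $(f,g)_w\in Id(S)$, the sum $\sum_j\beta_j v_j$ lies in $Id(S)$, so in the factor algebra it vanishes; by (iii), all $\beta_j=0$. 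But then $(f,g)_w=\sum_i\alpha_i u_i|_{s_i}$ with $u_i|_{\overline{s_i}}<w$, contradicting non-triviality.

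The main obstacle is the inductive cancellation step in (i)$\Rightarrow$(ii): one must be careful that the well-ordering on $[Y]X^{**}$ really does allow the double induction (on $w$ and on the multiplicity of terms achieving $w$) to terminate, and that Lemma 2.4 covers all three overlap patterns (including the $Y$-disjoint, $X$-disjoint case, which is handled without invoking the composition hypothesis). Everything else is formal bookkeeping once this reduction is in place.
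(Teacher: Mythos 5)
Your proposal is correct and follows essentially the same route as the paper: the same induction on the maximal term $w=u_i|_{\overline{s_i}}$ and its multiplicity (the paper's $l$) using Lemma 2.4 for (i)$\Rightarrow$(ii), the identical leading-term argument via Lemma 2.5 for (ii)$\Rightarrow$(iii), and the same use of Lemma 2.5 plus Remark 2 ($w>\overline{(f,g)_w}$) for (iii)$\Rightarrow$(i), which you merely phrase as a contradiction instead of directly. No substantive differences or gaps.
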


\textbf{Proof: }
$(i)\Rightarrow (ii)$. Suppose $0\neq f\in Id(S)$.
Then $f=\sum \alpha_i u_i|_{s_i} $ for some $\alpha_i\in k, \
s_i$-word $u_i|_{s_i}, \ s_i\in S$. Let $w_i=u_i|_{\overline{s_i}}$
and $w_1=w_2=\dots=w_l>w_{l+1}\geq \cdots$. We will prove the result
by using induction on $l$ and $w_1$.

If $l=1$, then the result is clear. If $l>1$, then
$w_1=u_1|_{\overline{s_1}}=u_2|_{\overline{s_2}}$. Now, by (i) and
Lemma 2.4, $u_1|_{s_1}\equiv u_2|_{s_2} \ \ mod(S,w_1)$. Thus,
\begin{eqnarray*}
\alpha_1u_1|_{s_1}+\alpha_2u_2|_{s_2}&=&(\alpha_1+\alpha_2)u_1|_{s_1}+\alpha_2(u_2|_{s_2}-u_1|_{s_1})\\
&\equiv&(\alpha_1+\alpha_2)u_1|_{s_1} \ \ \ \ \ \ mod(S,w_1).
\end{eqnarray*}
Therefore, if $\alpha_1+\alpha_2\neq 0$ or $l>2$, then the result
follows from the induction on $l$. For the case $\alpha_1+\alpha_2=
0$ and $l=2$, we use the induction on $w_1$. Now the result follows.

$(ii)\Rightarrow (iii)$. By Lemma 2.5,  $Irr(S)$ generates the
factor algebra. Moreover, if $0\neq h=\sum \beta_j u_j\in Id(S)$,
$u_j\in Irr(S), u_1>u_2>\cdots $ and  $ \beta_1\neq 0$, then
$u_1=\bar{h}=u|_{\bar{s}}$, a contradiction. This shows that
$Irr(S)$ is a $k$-linear basis of the factor algebra.

$(iii)\Rightarrow (i)$. For any $f, \ g\in S$, since $k[Y]S\subseteq
Id(S)$, we have $h=(f,g)_w\in Id(S)$. The result is trivial if
$(f,g)_w=0$. Assume that $(f,g)_w\neq 0$. Then, by Lemma 2.5, (iii)
and by noting that $w>\overline{(f,g)_w}=\bar{h}$, we have
$(f,g)_w\equiv0\ \ mod(S,w)$.

This shows (i).\ \hfill\ \ $\square$

\noindent{\bf Remark}: Theorem 2.6 is the Composition-Diamond lemma
for non-associative algebras when $Y=\emptyset$.

\section{Applications}
 Let $A$ be an arbitrary $K$-algebra and  $A$  be presented  by
generators $X$ and  defining relations $S$
$$
A={K}(X|S).
$$

Let $K$ have a presentation by generators $Y$ and  defining
relations $R$
$$
K=k[Y|R]
$$
as a quotient algebra of the polynomial algebra $k[Y]$ over $k$.

Then with a natural way, as $k[Y]$-algebras, we have an isomorphism
$$
k[Y|R](X|S)\rightarrow k[Y](X|S^l,Rx,x\in X), \sum
(f_i+Id(R))u_i+Id(S)\mapsto\sum f_iu_i+Id(S'),
$$
where $f_i\in k[Y], \ u_i\in X^{**},\ S'=S^l\cup \{gx|g\in R,\ x\in
X\},\ S^l=\{\sum f_iu_i\in k[Y](X)|\sum (f_i+Id(R))u_i\in S \}$.
Then $A$ has an expression
$$
A=k[Y|R](X|S)=k[Y](X|S^l,gx,\ g\in R,x\in X).
$$

\begin{theorem}
Each countably
generated non-associative algebra over an arbitrary commutative
algebra $K$ can be embedded into a two-generated non-associative
algebra over
$K$.
\end{theorem}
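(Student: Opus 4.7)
The plan is to apply Theorem 2.6 by constructing an explicit embedding. Write the countably generated $K$-algebra as $A = K(X \mid S)$ with $X = \{x_1, x_2, \ldots\}$, and use $K = k[Y \mid R]$ together with the isomorphism at the start of Section 3 to identify
$$
A = k[Y](X \mid S_0), \qquad S_0 = S^l \cup \{gx : g \in R,\ x \in X\}.
$$
Running the Shirshov algorithm, we may assume $S_0$ is a Gr\"obner--Shirshov basis in $k[Y](X)$ under the fixed deg-lex-type ordering.

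Introduce two fresh symbols $a, b$ and, for each $i \geq 1$, define the non-associative word $w_i = a(b(b(\cdots b(ba)\cdots )))$ containing exactly $i$ copies of $b$; so $w_1 = a(ba)$, $w_2 = a(b(ba))$, and in general $w_{i+1} = a(b \cdot r_i)$ where $r_i$ is the right subtree of $w_i$. Let $T$ be obtained from $S_0$ by the substitution $x_i \mapsto w_i$, and set
$$
B = k[Y](\{a, b\} \mid T'), \qquad T' = T \cup \{ga,\ gb : g \in R\},
$$
a two-generated $K$-algebra. Extend the deg-lex ordering to $k[Y](\{a,b\})$ so that the substitution $u \mapsto \tilde u$ preserves leading terms, and let $\varphi \colon A \to B$ be the $K$-algebra map with $\varphi(x_i) = w_i$; this is the candidate embedding.

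The key technical step is to verify (i) that $T'$ is a Gr\"obner--Shirshov basis in $k[Y](\{a,b\})$, and (ii) that the substitution sends $\mathrm{Irr}(S_0)$ injectively into $\mathrm{Irr}(T')$. Both rest on the following combinatorial property of the $w_i$: every proper non-associative subword of $w_j$ begins with the letter $b$, hence no $w_i$ occurs as a non-associative subword of a different $w_j$, and no $w_i$ can straddle the interface between adjacent occurrences of $w_j, w_k$ inside any larger substituted word. Consequently, each non-trivial composition among elements of $T$ arises, via substitution, from a composition in $S_0$ and is therefore trivial modulo $T'$ since $S_0$ is a GS basis. Compositions involving $ga$ or $gb$ collapse directly: an $X$-inclusion $(\tilde f, ga)_w$ reduces to $g\tilde f - g\tilde f = 0$, and the $Y$-intersection cases dissolve by the same calculations as in Lemma 2.4.

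Granted the Gr\"obner--Shirshov property, Theorem 2.6(iii) produces $k$-linear bases $\mathrm{Irr}(S_0)$ of $A$ and $\mathrm{Irr}(T')$ of $B$. The substitution sends $\mathrm{Irr}(S_0)$ into $\mathrm{Irr}(T')$, since for $u \in \mathrm{Irr}(S_0)$ the $Y$-component $u^Y$ is already $R$-reduced, so no $ga$ or $gb$ can reduce $\tilde u$, and no $\tilde s$ with $s \in S_0$ can reduce $\tilde u$ by the irreducibility of $u$; the map is injective because the root-level arrangement of $a$'s in $\tilde u$ unambiguously recovers the $X$-word $u^X$. Hence $\varphi$ is $k$-linearly injective and therefore a $K$-algebra embedding. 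The principal obstacle is the combinatorial non-straddling and non-sub-occurrence verification for the $w_i$, because this is what guarantees that no spurious compositions arise when passing from $S_0$ to $T'$.
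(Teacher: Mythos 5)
Your strategy differs from the paper's in a crucial way: you eliminate the generators $X$ by substituting $x_i\mapsto w_i(a,b)$ and present $B$ on $\{a,b\}$ alone, whereas the paper keeps $X$ among the generators, adds $a,b$ and the relations $a(b^i)-x_i$ (with $a>b>x_i$, so these have leading words $a(b^i)$), and then only has to check the finitely many new composition types. The step on which your route founders is the sentence ``Extend the deg-lex ordering to $k[Y](\{a,b\})$ so that the substitution $u\mapsto\tilde u$ preserves leading terms.'' This is not possible for deg-lex, and it is the load-bearing assumption of your whole argument. All $x_i$ have degree $1$ in $X^{**}$, but the words $w_i$ have unbounded degree $i+2$ in $\{a,b\}^{**}$; so for a perfectly admissible relation such as $s=x_1-x_2\in S^l$ (or any element produced by the Shirshov algorithm whose two highest terms have equal $X$-degree but involve different letters $x_i$), one has $\bar s=x_1$ while any degree-first ordering on $\{a,b\}^{**}$ gives $\overline{\tilde s}=w_2\neq\widetilde{\bar s}$. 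Once leading terms are not preserved, the two pillars of your proof collapse: compositions in $T$ no longer correspond to compositions in $S_0$ (so the Gr\"obner--Shirshov property does not transfer), and the map $\mathrm{Irr}(S_0)\to\mathrm{Irr}(T')$ fails (in the example, $x_2\in\mathrm{Irr}(S_0)$ but $\widetilde{x_2}=w_2=\overline{\tilde s}\notin\mathrm{Irr}(T')$). Repairing this would require constructing a monomial ordering on $\{a,b\}^{**}$ that is compatible with deg-lex on $X^{**}$ through the substitution, which cannot be degree-based and is a genuinely nontrivial problem; the paper's construction is designed precisely to avoid it, since with $X$ retained the leading words of $S^l\cup\{gx\}$ are untouched and the new relations' leading words all involve $a$ or $b$, so $\mathrm{Irr}(S')$ sits inside $\mathrm{Irr}(S_1)$ with no compatibility condition needed.

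Two further points are glossed over even granting such an ordering. First, your claim that an $X$-inclusion composition $(\tilde f,ga)_w$ ``reduces to $g\tilde f-g\tilde f=0$'' is not a reduction: computing as in the paper's $1\wedge4$ case, it equals a substituted instance of the composition $(f,gx_i)_{w'}$ in $k[Y](X)$, and its triviality uses the Gr\"obner--Shirshov property of $S_0$ (plus leading-term preservation again), not a literal cancellation. Second, compositions among the relations $ga,g'a$ (and $gb,g'b$, $ga,g'b$) have ambiguities whose $X$-part is a single letter or a word containing no block $w_i$, so they cannot be handled by substituted elements of $S_0$ at all; as in the paper's cases $4\wedge4$, $4\wedge5$, $5\wedge5$, their triviality requires first arranging (by the Shirshov algorithm in $k[Y]$) that $R$ is a Gr\"obner--Shirshov basis of $k[Y]$, an assumption you never make; the appeal to ``the same calculations as in Lemma 2.4'' does not cover this, since Lemma 2.4 presupposes the Gr\"obner--Shirshov property rather than proving the triviality of these compositions.
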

\begin{proof}\ Let the notation be as before.
Let $A$ be the non-associative algebra over $K=k[Y|R]$ generated by
$X=\{x_i|i=1,2,\dots\}$. We may assume that $A={k[Y|R]}(X|S)$ is
defined as above. Then $A$ can be presented as $A=k[Y](X|S^l,gx_i,\
g\in R, \ i=1,2,\dots)$. By Shirshov algorithm, we can assume that,
with the deg-lex ordering $>_Y$ on $[Y]$, $R$ is a
Gr\"{o}bner-Shirshov basis in the free commutative algebra $k[Y]$.
Let $>_X$ be the deg-lex ordering on $X^{**}$, where
$x_1>x_2>\dots$. We can also assume, by Shirshov algorithm, that
with the ordering on $[Y]X^{**}$ defined as before,
$S'=S^l\cup\{gx|g\in R,\ x\in X\}$ is a Gr\"{o}bner-Shirshov basis
in ${k[Y]}(X)$.

Let $B={k[Y]}(X,a,b|S_1\}$ where $S_1$ consists of
\begin{eqnarray*}
&&f_1=S^l,\\
&&f_2=\{gx|g\in R,\ x\in X\},\\
&& f_3=\{a(b^i)-x_i|i=1,2,\dots\},\\
&& f_4=\{ga|g\in R\},\\
&& f_5= \{gb|g\in R\}.
\end{eqnarray*}

Clearly, $B$ is a $K$-algebra generated by $a,b$. Thus, to prove the
theorem, by using our Theorem 2.6, it suffices to show that with the
ordering on $[Y](X\cup\{a,b\})^{**}$ as before, where $a>b>x_i,\
i=1,2,\dots$,
$S_1$ is a Gr\"{o}bner-Shirshov basis in ${k[Y]}(X,a,b)$.

Denote by $(i\wedge j)_{w_{ij}}$ the composition of the type $f_i$
and type $f_j$ with respect to the ambiguity $w_{ij}$. Since $S'$ is
a Gr\"{o}bner-Shirshov basis in ${k[Y]}(X)$, we need only to check
all compositions related to the following ambiguities $w_{ij}$:

%%%%%%%%%%%%%%%S%%%%%%%%%%%%%%%%
$1\wedge4,\   \  w_{14}=L(\bar{f}^Y,
\bar{g})(z_1(\bar{f}^X)z_2az_3)$;

$1\wedge 5,\   \  w_{15}=L(\bar{f}^Y,
\bar{g})(z_1(\bar{f}^X)z_2bz_3)$;

%%%%%%%%%%%%%%%%%%%%%%%%Rx

$2\wedge 4,\   \  w_{24}=L(\bar{g'}, \bar{g})(z_1xz_2az_3)$;

$2\wedge 5,\   \  w_{25}=L(\bar{g'}, \bar{g})(z_1xz_2bz_3)$;

%%%%%ab^i
$3\wedge 4 ,\  \  w_{34}=\bar{g}a(b^i)$;

$3\wedge 5 ,\ \  w_{35}=\bar{g}a(b^i)$;

%%%%%%%%%%%%%%%%%%%%Ra
$4\wedge 1,\   \  w_{41}=L(\bar{g},
\bar{f}^Y)(z_1az_2(\bar{f}^X)z_3)$;

$4\wedge 2,\   \  w_{42}=L(\bar{g}, \bar{g'})(z_1az_2xz_3)$;

$4\wedge 4, \ \  w_{44}=L(\overline{g_1},\overline{g_2})a$;

$4\wedge 5,\   \  w_{45}=L(\bar{g}, \bar{g'})(z_1az_2bz_3)$;

%%%%%%%%%%%%%%%%%Rb
$5\wedge 1,\   \  w_{51}=L(\bar{g},
\bar{f}^Y)(z_1bz_2(\bar{f}^X)z_3)$;

$5\wedge 2,\   \  w_{52}=L(\bar{g}, \bar{g'})(z_1bz_2xz_3)$;

$5\wedge 4,\   \  w_{54}=L(\bar{g}, \bar{g'})(z_1bz_2az_3)$;

$5\wedge 5, \ \  w_{55}=L(\overline{g_1},\overline{g_2})b$;

\noindent where $g,g', g_1, g_2\in R$, $f\in S^l$, $z_1,z_2,z_3\in
(X\cup\{a,b\})^*$ and $(z_1v_1z_2v_2z_3)$ is some bracketing.

Now, we prove that all the compositions are trivial.

$1\wedge 4 ,\  \   w_{14}=L(\bar{f}^Y,
\bar{g})(z_1(\bar{f}^X)z_2az_3)$, where $f\in S^l,\ g\in R$.

We can write $\bar{f}^X=(uxv)$, where $u,v\in X^*$. Since
$S'=\{S^l,Rx, x\in X\}$ is a Gr\"{o}bner-Shirshov basis in
${k[Y]}(X)$, we have $(f,gx)_{w}=\sum\alpha_iu_i|_{_{s_i}}$, where
$w=L(\bar{f}^Y, \bar{g})\bar{f}^X$, each $\alpha_i\in k,\ s_i\in
S',\ u_i\in [Y]X^{**}$ and $w>u_i|_{_{\overline{s_i}}}$. Then
\begin{eqnarray*}
(1,4)_{w_{14}}\
&=&\frac{L}{\bar{f}^Y}(z_1fz_2az_3)-\frac{L}{\bar{g}}(z_1(\bar{f}^X)z_2gaz_3)\\
&=&\frac{L}{\bar{f}^Y}(z_1fz_2az_3)-\frac{L}{\bar{g}}(z_1(ugxv)z_2az_3)+
\frac{L}{\bar{g}}(z_1(ugxv)z_2az_3)-\frac{L}{\bar{g}}(z_1(\bar{f}^X)z_2gaz_3)\\
&=&(z_1(\frac{L}{\bar{f}^Y}f-\frac{L}{\bar{g}}(ugxv))z_2az_3)+\frac{L}{\bar{g}}
g((z_1(uxv)z_2az_3)-(z_1(\bar{f}^X)z_2az_3))\\
&=&(z_1(f,gx)_{w}z_2az_3)+\frac{L}{\bar{g}}g((z_1(\bar{f}^X)z_2az_3)-(z_1(\bar{f}^X)z_2az_3))\\
&=&\sum\alpha_i(z_1u_i|_{_{s_i}}z_2az_3)\\
 &\equiv&0   ~~~~~mod(S_1,w_{14}).
\end{eqnarray*}
Similarly, $(1,5)_{w_{15}}\equiv0,\ (4,1)_{w_{41}}\equiv0,\ (5,1)_{w_{51}}\equiv0$.\\

$2\wedge 4,\   \  w_{24}=L(\bar{g'}, \bar{g})(z_1xz_2az_3)$, where
$g,g'\in R$.

If $|\bar{g'}|+|\bar{g}|>|L|$, then since $R$ is a
Gr\"{o}bner-Shirshov basis in $k[Y]$, $(g',
g)_w=(\frac{L}{\bar{g'}}g'-\frac{L}{\bar{g}}g)=\sum \alpha_iu_ih_i$,
where $w=L(\bar{g'},\ \bar{g})$, each $\alpha_i\in k, u_i\in [Y],\
h_i\in R$ and $w>u_i\overline{h_i}$. Thus
\begin{eqnarray*}
(2,4)_{w_{24}}&=&\frac{L}{\bar{g'}}(z_1g'xz_2az_3)-\frac{L}{\bar{g}}(z_1xz_2gaz_3)\\
&=&(\frac{L}{\bar{g'}}g'-\frac{L}{\bar{g}}g)(z_1xz_2az_3)\\
&=&\sum \alpha_iu_ih_i(z_1xz_2az_3)\\
&=&\sum \alpha_iu_i(z_1xz_2h_iaz_3)\\
&\equiv&0   ~~~~~mod(S_1,w_{24}).
\end{eqnarray*}.

Similarly, $(2,5)_{w_{25}}\equiv0,\ (4,2)_{w_{42}}\equiv0,\
(4,5)_{w_{45}}\equiv0,\
(5,2)_{w_{52}}\equiv0 $ and $(5,4)_{w_{54}}\equiv0$.\\

$3\wedge 4 ,\  \  w_{34}=\bar{g}a(b^i)$, where $ g\in R$.

Let $g=\bar{g}+r\in R$. Then
\begin{eqnarray*}
(3,4)_{w_{34}}\
&=&-\bar{g}x_i-ra(b^i)\\
&\equiv&-\bar{g}x_i-rx_i\\
&\equiv&gx_i\\
&\equiv&0   ~~~~~mod(S_1,w_{34}).
\end{eqnarray*}

Similarly, $(3,5)_{w_{35}}\equiv0$.\\

$4\wedge 4, \ \  w_{44}=L(\overline{g_1},\overline{g_2})a$, where
$g_1,g_2\in R$.

If $|\bar{g_1}|+|\bar{g_2}|>|L|$, then since $R$ is a
Gr\"{o}bner-Shirshov basis in $k[Y]$, $(g_1,
g_2)_w=(\frac{L}{\bar{g_1}}g_1-\frac{L}{\bar{g_2}}g_2)=\sum
\alpha_iu_ih_i$, where $w=L(\bar{g_1}, \bar{g_2})$, each
$\alpha_i\in k, u_i\in [Y],\ h_i\in R$ and $w>u_i\overline{h_i}$.
Thus
\begin{eqnarray*}
(4,4)_{w_{44}}
&=&\frac{L}{\bar{g_1}}(g_1a)-\frac{L}{\bar{g_2}}(g_2a) \\
&=&(\frac{L}{\bar{g_1}}g_1-\frac{L}{\bar{g_2}}g_2)a\\
&=&\sum \alpha_iu_ih_ia\\
&\equiv&0   ~~~~~mod(S_1,w_{44}).
\end{eqnarray*}.

If $|\bar{g_1}|+|\bar{g_2}|=|L|$, then
\begin{eqnarray*}
(4,4)_{w_{44}}
&=&\frac{L}{\bar{g_1}}(g_1a)-\frac{L}{\bar{g_2}}(g_2a) \\
&=&(\bar{g_2}g_1-\bar{g_1}g_2)a\\
&\equiv&((g_1-\bar{g_1})g_2-(g_2-\bar{g_2})g_1)a\\
&\equiv&0   ~~~~~mod(S_1,w_{44}).
\end{eqnarray*}

Similarly, $(5, 5)_{w_{55}}\equiv 0$.

Now we have proved that $S_1$ is a Gr\"{o}bner-Shirshov basis in
${k[Y]}(X,a, b)$.

The proof is complete. \end{proof}

A special case of Theorem 3.1 is the following corollary.

 \begin{Corollary}
Every countably generated non-associative algebra over a free
commutative algebra  can be embedded into a two-generated
non-associative algebra over a free commutative algebra.
 \end{Corollary}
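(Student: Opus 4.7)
The plan is to apply Shirshov's classical trick: introduce two fresh generators $a,b$ and encode the countably many generators $x_i$ of $A$ via the identities $x_i = a(b^i)$ (with some fixed bracketing of $b^i$). Concretely, given a presentation $A = k[Y|R](X|S) = k[Y](X|S^l, gx : g\in R, x\in X)$ with $S' = S^l \cup \{gx : g\in R, x\in X\}$ a Gr\"obner--Shirshov basis in $k[Y](X)$ (achievable by the Shirshov algorithm applied separately to $R$ in $k[Y]$ and then to $S'$), I would define
\[
B = k[Y](X,a,b \mid S_1),
\]
where $S_1 = f_1 \cup f_2 \cup f_3 \cup f_4 \cup f_5$ collects the lifted defining relations $S^l$, the relations $gx$ for $g\in R, x\in X$, the elimination rules $a(b^i) - x_i$, and the scalar-compatibility rules $ga$ and $gb$ for $g\in R$. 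Then $B$ is a $K$-algebra generated by $\{a,b\}$, and a $K$-algebra map $A \to B$ sending $x_i \mapsto a(b^i)$ is well defined.

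The embedding then reduces, by Theorem~\ref{t1}, to verifying that $S_1$ is a Gr\"obner--Shirshov basis in $k[Y](X,a,b)$ with respect to the ordering on $[Y](X\cup\{a,b\})^{**}$ built from the deg-lex orderings with $a > b > x_1 > x_2 > \cdots$. Once this is established, the $k$-basis $\mathrm{Irr}(S_1)$ for $B$ contains $\mathrm{Irr}(S')$, which is the $k$-basis of $A$; since the words in $\mathrm{Irr}(S')$ live in $[Y]X^{**}$ and the only rules of $S_1$ that rewrite them are those of $S'$ itself, no nontrivial relation among the images of a basis of $A$ can hold in $B$, giving the injectivity.

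The bulk of the proof is the case analysis of all ambiguities $w_{ij}$ between pairs of relations from the families $f_1,\dots,f_5$. The ordering choice $a > b > x_i$ ensures that any leading term containing $a$ or $b$ cannot occur inside $\bar f^X$ for $f\in S^l$, so the only ambiguities one must check are those listed in the proof: $1\wedge 4$, $1\wedge 5$, $2\wedge 4$, $2\wedge 5$, $3\wedge 4$, $3\wedge 5$, and the symmetric pairs with $a,b$ on the left, together with $4\wedge 4$ and $5\wedge 5$. The families split naturally: compositions of type $(1,4),(1,5),(4,1),(5,1)$ are reduced to compositions $(f,gx)_w$ already known to be trivial mod $(S',w)$; compositions $(2,4),(2,5),(4,2),(4,5),(5,2),(5,4)$ factor through a $(g',g)_w$ composition of $R$ in $k[Y]$; compositions $(3,4),(3,5)$ collapse after one application of $f_3$ followed by an $f_2$ rewrite; and the self-compositions $(4,4),(5,5)$ follow from the GS-basis property of $R$, handling both the intersection and disjoint subcases as in Lemma~2.4.

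The main obstacle, which is essentially bookkeeping rather than conceptual, is the $Y$-disjoint subcase of compositions such as $(4,4)_{w_{44}}$, where one must add and subtract the leading parts $\bar g_1, \bar g_2$ to exhibit the difference as a sum of $h$-words with leading terms strictly below $w$, exactly as in Case~3 of Lemma~2.4. The key structural observation that makes all these verifications uniform is that whenever a relation of type $f_4$ or $f_5$ meets a relation with a leading $X$-word, the $a$ or $b$ can be pushed past the $X$-factor without change of leading monomial, because the multiplication in $[Y](X\cup\{a,b\})^{**}$ multiplies the $Y$-parts commutatively and preserves the $X$-skeleton; thus every such composition reduces to a known trivial composition of $R$ or of $S'$, completing the verification and hence the theorem.
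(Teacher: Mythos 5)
Your proposal is correct and takes essentially the same approach as the paper, which obtains the Corollary as the special case $K=k[Y]$ (i.e.\ $R=\emptyset$) of Theorem 3.1, whose proof is exactly the construction $B=k[Y](X,a,b\mid S_1)$ with the relations $f_1,\dots,f_5$ and the verification, via Theorem 2.6, that $S_1$ is a Gr\"obner--Shirshov basis, as you outline. Note only that in the free-commutative case the families $f_2,f_4,f_5$ are empty, so most of your composition checks become vacuous and injectivity follows at once from $\mathrm{Irr}(S')\subseteq \mathrm{Irr}(S_1)$.
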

 \ \

\noindent{\bf Acknowledgement}. The authors would like to express
their deepest gratitude to Professor L.A. Bokut for his kind
guidance, useful discussions and enthusiastic encouragement.

\end{document}